\numberwithin{equation}{section} 
\renewcommand*\env@matrix[1][*\c@MaxMatrixCols c]{%
  \hskip -\arraycolsep
  \let\@ifnextchar\new@ifnextchar
  \array{#1}}
\theoremstyle{plain}
\newtheorem{Thm}{Theorem}[section]
\newtheorem{Lem}[Thm]{Lemma}
\newtheorem{Prop}[Thm]{Proposition}
\newtheorem{Cor}[Thm]{Corollary}
\newtheorem*{Claim*}{Claim}
\theoremstyle{definition}
\newtheorem{ex}[Thm]{Example}
\newtheorem{rem}[Thm]{Remark}
\tikzstyle{vertex}=[circle, draw, inner sep=0pt, minimum size=6pt] 
\definecolor{LemonChiffon}{rgb}{100, 98, 80}
\definecolor{myblue}{rgb}{0,0.4,0.8}
\definecolor{orange}{rgb}{1, 0.4, 0}
\definecolor{mygreen}{rgb}{0, 0.8, 0.4}
\definecolor{myred}{rgb}{204, 0, 0}
\definecolor{violet}{RGB}{0.4,0.2,1}
\definecolor{brown}{rgb}{0.6, 0.4, 0}
\title{Digraphs whose $m$-step competition graphs are trees}
\author[]{Myungho Choi
}
\author[]{Suh-Ryung Kim
\thanks{Corresponding author

E-mail addresses: nums8080@naver.com (M.Choi), srkim@snu.ac.kr (S.-R.Kim)
}
}
\affil[]{Department of Mathematics Education,
Seoul National University, Seoul 08826, Republic of Korea}
\begin{document}
\maketitle
\begin{abstract}
In this paper, we completely characterize the digraphs of order $n$ whose $m$-step competition graphs are star graphs for positive integers $2\leq m < n$.
This result in matrix version identifies the solution set to the matrix equation $X^m(X^T)^m= \Lambda_n+I_n$ for positive integers $2\leq m < n$ where $I_n$ is the identity matrix of order $n$ and $\Lambda_n$ is a $(0,1)$ Boolean matrix such that the first row and the first column consist of $1$'s except $(1,1)$-entry and the remaining entries are $0$, which is the adjacency matrix of a star graph of order $n$.

We also derive meaningful properties of the digraphs whose $m$-step competition graphs are trees. In the process,
we extend a result of Helleloid~[Connected triangle-free $m$-step competition graphs, Discrete Appl.\ Math.\ 145 (2005) 376--383] by showing that for all positive integers $m \geq 2$ and $n$, the connected triangle-free $m$-step competition graph on $n$ vertices is a tree.
\end{abstract}
\noindent
{\it Keywords.} $m$-step competition graph; triangle-free graph; tree-inducing digraph; tree; star graph.

\noindent
{{{\it 2010 Mathematics Subject Classification.} 05C20, 05C75}}
\section{Introduction}
For all undefined
graph theory terminologies, see \cite{bondy1976graph}.
In this paper, all the graphs and digraphs are assumed to be finite and
we only consider digraphs having no parallel arcs.

Let $D$ be a digraph and $m$ be a positive integer.
A vertex $y$ (resp.\ $x$) is an {\em $m$-step prey} (resp.\ {\em $m$-step predator}) of a vertex $x$ (resp.\ $y$) if and only if there exists a directed walk from $x$ to $y$ of length $m$.
The {\em $m$-step competition graph} of a digraph $D$, denoted by $C^m(D)$, has the same vertex set as $D$ and has an edge between two distinct vertices $u$ and $v$ if and only if there exists an $m$-step common prey of $u$ and $v$ in $D$. The notion of an $m$-step competition graph introduced by Cho~{\em et al.}~\cite{cho2000m} is a generalization of the competition graph introduced by Cohen~\cite{Cohen} (the {\em competition graph} of a digraph $D$ is $C^1(D)$).
Since its introduction, an $m$-step competition graph  has been extensively studied (see, for example, \cite{belmont2011complete,helleloid2005connected,ho2005m,park2011m,zhao2009note,eoh2020m,jung2022competition}). The author may refer \cite{factor20111,zhang20161,eoh2020niche,xiong2020competition} to for variants of competition graph other than $m$-step competition graphs.

We call a vertex of indegree $0$ and a vertex of outdegree $0$ in a digraph $D$ a {\em source} and a {\em sink}, respectively, of $D$. A sink of $D$ becomes an isolated vertex in $C^m(D)$.
Since we principally study the digraphs whose $m$-step competition graphs are connected for an integer $m \ge 2$, it is sufficient to consider digraphs without sinks.
In this context, throughout this paper, we assume that any digraph has no sink.

Given any digraph $D$, we can associate a graph $G$ on the same vertex set as $V(D)$
simply by replacing each arc $(u,v)$ with an edge $uv$. This graph is said to be the
{\it underlying graph} of $D$.
A digraph $D$ is called {\it weakly connected} if the underlying graph of $D$ is connected and a
{\it weak component} of $D$ is a subdigraph of $D$ induced by a component in the underlying graph of $D$.

For two vertex-disjoint weakly connected digraphs $D_1$ and $D_2$, it is true that
$C^m(D_1 \cup D_2)=C^m(D_1) \cup C^m(D_2)$ for any positive $m$.
In this vein, it is sufficient to consider weakly connected digraphs throughout this paper.
From now on, we assume that any digraph in this paper is weakly connected unless otherwise mentioned.

We call a complete bipartite graph $K_{1,l}$ for some positive integer $l$ a {\it star graph}.

In this paper, we show the following theorem (the definitions of a windmill digraph and an $m$-conveyor digraph will be given right after the theorem statement).
\begin{Thm} \label{thm:complete_star}
For positive integers $2\leq m < n$, the star graph is an $m$-step competition graph of a digraph $D$ with $n$ vertices if and only if one of the following holds:
\begin{itemize}
\item[(i)] $D$ is a windmill digraph;
\item[(ii)] $D$ is an $m$-conveyor digraph;
\item[(iii)] $m=2$ and $D$ is isomorphic to the digraph given in Figure~\ref{fig:loopless-stargraph}.
\end{itemize}
\end{Thm}
A {\it windmill digraph} is defined to be a digraph satisfying the following three conditions:
\begin{enumerate}
\item[($\text{W}1$)] $D$ has exactly one source $v$;
\item[($\text{W}2$)] $D-v$ is a vertex-disjoint union of directed cycles;
\item[($\text{W}3$)]
each vertex except $v$ is a prey of $v$
\end{enumerate}
(see the windmill digraphs of order $3$ in Figure~\ref{fig:example-star-generating-three} for an illustration).

\begin{figure}
\begin{center}
 \begin{tikzpicture}[auto,thick]
    \tikzstyle{player}=[minimum size=5pt,inner sep=0pt,outer sep=0pt,fill,color=black, circle]
    \tikzstyle{source}=[minimum size=5pt,inner sep=0pt,outer sep=0pt,ball color=black, circle]
    \tikzstyle{arc}=[minimum size=5pt,inner sep=1pt,outer sep=1pt, font=\footnotesize]
    \path (0:-1cm)   node [player]  (a) {};
    \path (0:0cm)     node [player]  (b)  {};
    \path (-90:1cm)      node [player]  (d) {};
   \draw[black,thick,-stealth] (a) - +(b);
   \draw[black,thick,-stealth] (a) - +(d);
   \draw[black,thick,-stealth] (b) to[in=100, distance=0.5cm](b);
   \draw[black,thick,-stealth] (d) to[in=100, distance=0.5cm](d);
   \path (-100:1.8cm)      node  (d) {$D_1$};
    \end{tikzpicture}
    \qquad  \qquad
    \begin{tikzpicture}[auto, thick]
    \tikzstyle{player}=[minimum size=5pt,inner sep=0pt,outer sep=0pt,fill,color=black, circle]
    \tikzstyle{source}=[minimum size=5pt,inner sep=0pt,outer sep=0pt,ball color=black, circle]
    \tikzstyle{arc}=[minimum size=5pt,inner sep=1pt,outer sep=1pt, font=\footnotesize]
    \path (0:-1cm)   node [player]  (a) {};
    \path (0:0cm)     node [player]  (b)  {};
    \path (-90:1cm)      node [player]  (d) {};
    \path (-100:1.8cm)      node  (e) {$D_2$};
   \draw[black,thick,-stealth] (a) - +(b);
   \draw[black,thick,-stealth] (a) - +(d);
   \draw[black,thick,-stealth] (b) to[in=90, out=270](d);
   \draw[black,thick,-stealth] (d) to[in=0, out=0](b);
    \end{tikzpicture}
 \end{center}
 \caption{The windmill digraphs with three vertices.}
\label{fig:example-star-generating-three}
\end{figure}
We call a nontrivial directed path or cycle connecting vertices of indegree $2$ an {\em internally secure lane} if each of its interior vertices has indegree $1$.

We call a digraph $D$ an {\it $m$-conveyor digraph} for some $m \geq 2$ if
$D$ has a vertex $v$ satisfying the following conditions:
\begin{enumerate}
\item[(M1)] $v$ is the only predator of $v$;
\item[(M2)] $D-v$ is a vertex-disjoint union of directed cycles;
\item[(M3)] each internally secure lane in $D$ has length at most $m$
\end{enumerate}
(see the $2$-conveyor digraphs of order $4$ in Figure~\ref{fig:example-star-generating-typeB} for an illustration).
\begin{figure}
\begin{center}
 \begin{tikzpicture}[auto,thick]
    \tikzstyle{player}=[minimum size=5pt,inner sep=0pt,outer sep=0pt,fill,color=black, circle]
    \tikzstyle{source}=[minimum size=5pt,inner sep=0pt,outer sep=0pt,ball color=black, circle]
    \tikzstyle{arc}=[minimum size=5pt,inner sep=1pt,outer sep=1pt, font=\footnotesize]
    \path (0:-1cm)   node [player]  (a) [label=left:$v$] {};
    \path (0:0cm)     node [player]  (b)  {};
    \path (90:1cm)   node [player]  (c)  {};
    \path (-90:1cm)      node [player]  (d) {};
   \draw[black,thick,-stealth] (a) - +(b);
   \draw[black,thick,-stealth] (a) - +(c);
   \draw[black,thick,-stealth] (a) - +(d);
   \draw[black,thick,-stealth] (a) to[in=100, distance=0.5cm](a);
   \draw[black,thick,-stealth] (b) to[in=100, distance=0.5cm](b);
   \draw[black,thick,-stealth] (c) to[in=100, distance=0.5cm](c);
   \draw[black,thick,-stealth] (d) to[in=100, distance=0.5cm](d);
    \end{tikzpicture}
     \qquad  \qquad
    \begin{tikzpicture}[auto, thick]
    \tikzstyle{player}=[minimum size=5pt,inner sep=0pt,outer sep=0pt,fill,color=black, circle]
    \tikzstyle{source}=[minimum size=5pt,inner sep=0pt,outer sep=0pt,ball color=black, circle]
    \tikzstyle{arc}=[minimum size=5pt,inner sep=1pt,outer sep=1pt, font=\footnotesize]
    \path (0:-1cm)   node [player]  (a) [label=left:$v$] {};
    \path (0:0cm)     node [player]  (b)  {};
    \path (90:1cm)   node [player]  (c)  {};
    \path (-90:1cm)      node [player]  (d) {};
   \draw[black,thick,-stealth] (a) - +(b);
   \draw[black,thick,-stealth] (a) - +(c);
   \draw[black,thick,-stealth] (a) to[in=100, distance=0.5cm](a);
   \draw[black,thick,-stealth] (c) to[in=100, distance=0.5cm](c);
   \draw[black,thick,-stealth] (b) to[in=90, out=270](d);
   \draw[black,thick,-stealth] (d) to[in=0, out=0](b);
    \end{tikzpicture}
     \qquad  \qquad
    \begin{tikzpicture}[auto, thick]
    \tikzstyle{player}=[minimum size=5pt,inner sep=0pt,outer sep=0pt,fill,color=black, circle]
    \tikzstyle{source}=[minimum size=5pt,inner sep=0pt,outer sep=0pt,ball color=black, circle]
    \tikzstyle{arc}=[minimum size=5pt,inner sep=1pt,outer sep=1pt, font=\footnotesize]
    \path (0:-1cm)   node [player]  (a) [label=left:$v$] {};
    \path (0:0cm)     node [player]  (b)  {};
    \path (90:1cm)   node [player]  (c)  {};
    \path (-90:1cm)      node [player]  (d) {};
   \draw[black,thick,-stealth] (a) - +(b);
   \draw[black,thick,-stealth] (a) - +(c);
   \draw[black,thick,-stealth] (a) - +(d);
   \draw[black,thick,-stealth] (a) to[in=100, distance=0.5cm](a);
   \draw[black,thick,-stealth] (c) to[in=100, distance=0.5cm](c);
   \draw[black,thick,-stealth] (b) to[in=90, out=270](d);
   \draw[black,thick,-stealth] (d) to[in=0, out=0](b);
    \end{tikzpicture}

    \begin{tikzpicture}[auto,thick]
    \tikzstyle{player}=[minimum size=5pt,inner sep=0pt,outer sep=0pt,fill,color=black, circle]
    \tikzstyle{source}=[minimum size=5pt,inner sep=0pt,outer sep=0pt,ball color=black, circle]
    \tikzstyle{arc}=[minimum size=5pt,inner sep=1pt,outer sep=1pt, font=\footnotesize]
    \path (0:-1cm)   node [player]  (a) [label=left:$v$] {};
    \path (0:0cm)     node [player]  (b)  {};
    \path (90:1cm)   node [player]  (c)  {};
    \path (-90:1cm)      node [player]  (d) {};
   \draw[black,thick,-stealth] (a) - +(b);
   \draw[black,thick,-stealth] (a) - +(c);
   \draw[black,thick,-stealth] (a) to[in=100, distance=0.5cm](a);
   \draw[black,thick,-stealth] (c) to[in=90, out=270](b);
   \draw[black,thick,-stealth] (b) to[in=90, out=270](d);
   \draw[black,thick,-stealth] (d) to[in=10, out=0](c);
    \end{tikzpicture}
             \qquad  \qquad
 \begin{tikzpicture}[auto,thick]
    \tikzstyle{player}=[minimum size=5pt,inner sep=0pt,outer sep=0pt,fill,color=black, circle]
    \tikzstyle{source}=[minimum size=5pt,inner sep=0pt,outer sep=0pt,ball color=black, circle]
    \tikzstyle{arc}=[minimum size=5pt,inner sep=1pt,outer sep=1pt, font=\footnotesize]
    \path (0:-1cm)   node [player]  (a) [label=left:$v$] {};
    \path (0:0cm)     node [player]  (b)  {};
    \path (90:1cm)   node [player]  (c)  {};
    \path (-90:1cm)      node [player]  (d) {};
   \draw[black,thick,-stealth] (a) - +(b);
   \draw[black,thick,-stealth] (a) - +(c);
   \draw[black,thick,-stealth] (a) - +(d);
   \draw[black,thick,-stealth] (a) to[in=100, distance=0.5cm](a);
   \draw[black,thick,-stealth] (c) to[in=90, out=270](b);
   \draw[black,thick,-stealth] (b) to[in=90, out=270](d);
   \draw[black,thick,-stealth] (d) to[in=10, out=0](c);
    \end{tikzpicture}
 \end{center}
 \caption{The $2$-conveyor digraphs of order $4$}
\label{fig:example-star-generating-typeB}
\end{figure}

\begin{figure}
\begin{center}
    \begin{tikzpicture}[auto,thick]
    \tikzstyle{player}=[minimum size=5pt,inner sep=0pt,outer sep=0pt,fill,color=black, circle]
    \tikzstyle{source}=[minimum size=5pt,inner sep=0pt,outer sep=0pt,ball color=black, circle]
    \tikzstyle{arc}=[minimum size=5pt,inner sep=1pt,outer sep=1pt, font=\footnotesize]
    \path (0:0cm)   node [player]  (u)  [label=below:$u$] {};
    \path (45:1.5cm)     node [player]  (w)  [label=above:$y$] {};
    \path (0:1.5cm)     node [player]  (v)  [label=below:$z$] {};
   \draw[black,thick,-stealth] (w) to [in=45, out=225, distance=0.5cm](u);
      \draw[black,thick,-stealth] (u) to [in=180, out=90, distance=0.5cm](w);
  \draw[black,thick,-stealth] (v) to [out=110, in=-65, distance=0.5cm](w);
   \draw[black,thick,-stealth] (u) to[out=0, in=180,  distance=0.5cm](v);
    \end{tikzpicture}
\quad \quad \quad \quad
    \begin{tikzpicture}[auto,thick]
    \tikzstyle{player}=[minimum size=5pt,inner sep=0pt,outer sep=0pt,fill,color=black, circle]
    \tikzstyle{source}=[minimum size=5pt,inner sep=0pt,outer sep=0pt,ball color=black, circle]
    \tikzstyle{arc}=[minimum size=5pt,inner sep=1pt,outer sep=1pt, font=\footnotesize]
    \path (0:0cm)   node [player]  (u)  [label=below:$u$] {};
    \path (45:1.5cm)     node [player]  (w)  [label=above:$y$] {};
    \path (0:1.5cm)     node [player]  (v)  [label=below:$z$] {};
  \draw[black,thick,-] (u) - +(w);
  \draw[black,thick,-] (u) - +(v);
    \end{tikzpicture}
\caption{A digraph and its $2$-step competition graph}
\label{fig:loopless-stargraph}
\end{center}
\end{figure}
The adjacency matrix of a windmill digraph 
is in the form of the first matrix given in Figure~\ref{fig:matrix_windmill}.
Here, $\Gamma_n$ is the adjacency matrix of a directed cycle of length $n$, that is, 
\[(\Gamma_n)_{ij}=\begin{cases} 1 & \text{if $j=i+1$ or $(i,j)=(n,1)$}; \\
0 & \text{otherwise.}\end{cases}\]
The adjacency matrix of an $m$-conveyor digraph is in the form of the second matrix given in Figure~\ref{fig:matrix_windmill}.
The first row represents $v$ satisfying (M1) and (M2).
By (M3), the $(0,1)$ nonzero matrix $Q^{(m)}_k$ has  size $1\times k$ and satisfies the following properties:
\begin{enumerate}
\item[(F1)]  the number of consecutive zeros is at most $m-1$;
\item[(F2)] if 
the $(1,1)$-entry and the $(1,k)$-entry equal $0$, then the number of first consecutive zeros and that of last consecutive zeros add up to at most $m-1$.
\end{enumerate}

\begin{figure}
\begin{equation*}
\begin{bmatrix}[cccc]
0   &  J & \cdots & J    \\
                   O    & \Gamma_{n_1}  & O & O   \\
  \vdots   & O  & \ddots & O   \\
                     O  &  O & O & \Gamma_{n_k}   \\
\end{bmatrix}
, \quad
\begin{bmatrix}[cccc]
 1  &  Q^{(m)}_{n_1} & \cdots & Q^{(m)}_{n_k}   \\
                  O    & \Gamma_{n_1}  & O & O   \\
  \vdots   & O  & \ddots & O   \\
                     O  &  O & O & \Gamma_{n_k}   \\
\end{bmatrix}
, \quad
\begin{bmatrix}[ccc]
0   &  1 & 1    \\
1 & 0  & 0  \\
0 & 1 & 0   \\
\end{bmatrix}
\end{equation*}
\caption{
Adjacency matrices of a windmill digraph, a $m$-conveyor digraph, and the digraph given in Figure \ref{fig:loopless-stargraph}, respectively, where the blocks $J$ and $O$ stand for a matrix of all $1$'s and a zero matrix, respectively.
}
\label{fig:matrix_windmill}
\end{figure}

Theorem~\ref{thm:complete_star} may be restated in terms of matrices.
For the two-element Boolean algebra $\mathcal{B}=\{0,1\}$, $\mathcal{B}_n$ denotes the set of all $n \times n$ matrices over $\mathcal{B}$.
Under the Boolean operations ($1 + 1 = 1$, $0 + 0 = 0$, $1 + 0 = 1$, $1 \times 1 = 1$, $0 \times 0 = 0$, $1 \times 0 = 0$), matrix addition and multiplication are still well-defined in $\mathcal{B}_n$.
Throughout this paper, a matrix is Boolean unless otherwise mentioned.

We note that the adjacency matrix of $C^m(D)$ for a digraph $D$ of order $n$ is the matrix $A^*_{m}$ obtained from $A^m(A^T)^m$ by replacing each of diagonal element with $0$ where $A$ is the adjacency matrix of $D$.
To see why, we take two distinct vertices $u$ and $v$ of $D$ and suppose that the $i$th row and the $j$th row are the rows corresponding to $u$ and $v$, respectively.
Then
\begin{tabbing}
\ \ \ \ \ \ \= $u$ and $v$ are adjacent in $C^m(D)$ \\
$\Leftrightarrow$ \>  $u$ and $v$ have an $m$-step common prey in $D$ \\
$\Leftrightarrow$ \> inner product of the $i$th row and the $j$th row of $A^m$ is $1$\\
$\Leftrightarrow$ \>  the $(i,j)$-entry of $A^*_m$ is $1$.
\end{tabbing}
Thus
$u$ and $v$ are adjacent in $C^m(D)$ if and only if the $(i,j)$-entry of $A^*_m$ is $1$.
Therefore we have the following corollary restating Theorem~\ref{thm:complete_star} in terms of matrices:
\begin{Cor}[Matrix version]
 For positive integers $2\leq m < n$, a square matrix $X$ of order $n$ satisfies $X^m(X^T)^m=\Lambda_n+I_n$ if and only if
$P^TXP$ for some permutation matrix $P$ of order $n$ is one of the matrices given in Figure~\ref{fig:matrix_windmill}, where $I_n$ is the identity matrix of order $n$ and $\Lambda_n$ is the square matrix of order $n$ with the first row and first column of $\Lambda_n$
consisting of $1$'s except $(1,1)$-entry and the remaining entries being $0$.
\end{Cor}

We also prove the following result.
\begin{Thm} \label{thm:triangle-free-tree}
For all positive integers $2\leq m<n$
the connected triangle-free $m$-step competition graph on $n$ vertices is a tree.
\end{Thm}
Even for a digraph $D$ and an integer $m > |V(D)|$, the same is true as follows.
\begin{Thm}
[Helleloid \cite{helleloid2005connected}]
\label{thm:traingle-free}
 For all positive integers $m \geq n$, the only connected triangle-free $m$-step competition graph on $n$ vertices is the star graph.
\end{Thm}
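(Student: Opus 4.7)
The plan is to work with the identification $C^m(D) = C^1(D^m)$: two vertices share a common $m$-step prey in $D$ exactly when they share a common out-neighbor in the $m$-step digraph $D^m$. Let $G = C^m(D)$ be connected, triangle-free, on $n$ vertices, with $m \ge n$. If some vertex $w$ of $D^m$ had three distinct in-neighbors $a,b,c$, then $\{a,b,c\}$ would form a triangle in $G$; hence the in-degree of every vertex of $D^m$ is at most $2$. Each such vertex therefore contributes at most one edge to $G$, giving $e(G) \le n$. Combined with connectedness, which yields $e(G) \ge n-1$, the graph $G$ is either a tree or is unicyclic, and triangle-freeness forces any cycle in $G$ to have length at least $4$.

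The hypothesis $m \ge n$ enters through the observation that every walk of length $m$ in $D$ visits $m+1 > n$ vertices, hence revisits some vertex, and so contains a directed cycle of $D$. In particular $D$ has at least one directed cycle $C$ of length $k \le n \le m$, and every length-$m$ walk in $D$ may be regarded as traversing such a cycle. Moreover, if a length-$m$ walk from $x$ to $w$ traverses a cycle of length $k$, then length-$(m+jk)$ walks from $x$ to $w$ also exist for every $j \ge 0$, and deleting one traversal yields a length-$(m-k)$ walk. This flexibility is the main lever for creating unwanted common preys while staying at length exactly $m$.

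Finally, I would rule out every non-star configuration. Suppose first that $G$ contains a path $u$--$a$--$b$--$x$ with $\deg_G(a), \deg_G(b) \ge 2$, witnessed by common preys $w_1$ (for $ua$) and $w_2$ (for $ab$). From a length-$m$ walk realising, say, $b \to w_2$, and using the cycle extracted above, I would splice in an appropriate number of cycle traversals to produce a length-$m$ walk from $b$ ending at $w_1$ instead, giving $w_1$ three distinct in-neighbors $u,a,b$ in $D^m$ and a triangle in $G$, a contradiction. A cyclic variant of the same splicing argument handles the unicyclic case: for a $(\ge 4)$-cycle $v_1v_2\cdots v_\ell v_1$ in $G$ the witnesses $w_1,\dots,w_\ell$ must be pairwise distinct (else a witness has three in-neighbors), and splicing along $C$ again forces a third in-neighbor at some $w_i$. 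The main obstacle I anticipate is precisely this splicing step: one must route the cycle $C$ so that it is reachable from the vertex whose walk is being modified and so that the length adjustment lands at exactly $m$. The slack provided by $m \ge n$, together with the standing assumption that every vertex of $D$ has out-degree at least $1$ (so that walks can always be extended forward), should supply the room needed to carry the splicing out in each case, leaving the star $K_{1,n-1}$ as the only possibility.
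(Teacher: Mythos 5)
There is no proof of Theorem~\ref{thm:traingle-free} in this paper to compare you against: the theorem is quoted from Helleloid~\cite{helleloid2005connected}, and the paper only recovers the special case of digraphs having a source (for every $m\ge 2$) through its star-generating-digraph machinery. So your argument must stand on its own, and at present it does not. The preliminary reductions are correct: triangle-freeness forces every vertex of $D^m$ to have in-degree at most $2$, hence $e(G)\le n$ and $G$ is a tree or unicyclic, and $m\ge n$ guarantees that every length-$m$ walk repeats a vertex and hence contains a directed cycle. But these are the easy observations; the whole content of the theorem sits in the step you call splicing, and that step has a genuine gap. To give $w_1$ the third in-neighbor $b$ in $D^m$ you need \emph{some} walk from $b$ to $w_1$, and nothing in your setup provides one: the witness $w_1$ of the edge $ua$ and the witness $w_2$ of the edge $ab$ may lie in different terminal parts of $D$, so the cycle wrapped by the $u\to w_1$ and $a\to w_1$ walks can be unreachable from $b$, and there is nothing to splice. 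Even when $w_1$ is reachable from $b$ by a walk of some length $\ell\le m$ through a cycle of length $k$, inserting traversals only produces lengths $\ell+jk$, so you additionally need the congruence $m\equiv \ell \pmod{k}$ for a cycle actually available on that walk; the slack $m\ge n$ does not by itself supply this. The same two difficulties recur verbatim in your unicyclic case. Since excluding a $P_4$ (and a cycle of length at least $4$) is exactly what separates trees and unicyclic graphs from stars, the argument is missing precisely at its decisive point, which you yourself flag as ``the main obstacle'' without resolving it.

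Two smaller points. The statement also asserts that the star \emph{is} an $m$-step competition graph, and your sketch never exhibits a digraph realizing $K_{1,n-1}$ (the paper's star-generating digraphs, e.g.\ one source whose prey all carry loops, do this for every $m$). More importantly, any successful version of your plan will have to control globally which cycles of $D$ are reachable from which vertices and with which residues modulo the cycle lengths; a workable route would be to prove a precise lemma of the form ``if $x$ has an $m$-step prey $w$ witnessed by a walk whose repeated portion is a cycle $Z$ of length $k$, then exactly such-and-such vertices on or before $Z$ also have $w$ as an $m$-step prey,'' and then derive the contradiction from in-degree at most $2$ in $D^m$, rather than asserting that a suitable cycle can be routed to the vertex whose walk you wish to modify. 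Alternatively, note that the paper's own source-based analysis (Theorems~\ref{thm:main-result-1} and~\ref{thm:final-connect1}) proves the conclusion only when $D$ has a source, so it cannot be borrowed wholesale for the general statement you are attempting.
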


In 2000, Cho \emph{et al.}\cite{cho2000m} posed the following question: For which values of $m$ and $n$ is $P_n$ an $m$-step competition graph?
In 2005, Helleloid \cite{helleloid2005connected} partially answered the question and study connected triangle-free $m$-step competition graphs and obtain Theorem~\ref{thm:traingle-free}.
By Theorems~\ref{thm:triangle-free-tree} and~\ref{thm:traingle-free}, we have the following more general result.
\begin{Cor}\label{thm:expansion}
For all positive integers $m\geq 2$ and $n$,
the connected triangle-free $m$-step competition graph on $n$ vertices is a tree.
\end{Cor}
As the rest of this paper is devoted to proving Theorems~\ref{thm:complete_star} and \ref{thm:triangle-free-tree},  we may assume from now on that $m\geq 2$ and $m < n$ whenever we are given a digraph of order $n$ whose $m$-step competition graph is triangle-free.

\section{The triangle-free $m$-step competition graphs}
In this section, we show that all the connected triangle-free $m$-step competition graphs are trees.

Let $D$ be a digraph and $v$ be a vertex of $D$.
We denote the $m$-step prey of $v$ by $N^+_{D^m}(v)$ and 
the $m$-step predators of $v$ by $N^-_{D^m}(v)$, respectively.
When no confusion is likely, we will just write $N^+_m(v)$ and $N^-_m(v)$.
We note that $N^+_1(v)=N^+(v)$ and $N^-_1(v)=N^-(v)$.
Technically, we write $N^+_0(v)=N^-_0(v)=\{v\}$.
We call $|N^-_i(v)|$ and $|N^+_i(v)|$ the {\it$i$-step indegree} and the {\it$i$-step outdegree} of $v$, respectively, and denote them by $d^-_i(v)$ and $d^+_i(v)$, respectively.
We note that $d^+_1(v)=d^+(v)$ and $d^-_1(v)=d^-(v)$.

We make the following useful observations.

\begin{Lem}\label{lem:triangle-free-chars}
Let $D$ be a digraph such that $C^m(D)$ is triangle-free. Then the following are true:
\end{Lem}
\begin{enumerate}[{(1)}]
\item Any vertex in $D$ has $i$-step outdegree at least $1$ for any positive integer $i$.
\item Any vertex in $D$ has $i$-step indegree at most $2$ for any positive integer $i \leq m$.
\item 
If a directed walk contains at least two vertices and its origin and terminus have indegree $2$, then it is a juxtaposition of internally secure lanes. 
\item
For any two internally secure lanes $W$ and $W'$ in $D$ starting at $w$ and $w'$, respectively, and
sharing $v$ as an interior vertex,
the $(w,v)$-section of $W$ and the $(w',v)$-section of $W'$ coincide.
\end{enumerate}
\begin{proof}
	Since we have assumed that any digraph has no sinks, part (1) is true.

To prove part (2), suppose, to the contrary,
$d^-_i(u) \geq 3$ for some vertex $u$ of $D$ and a positive integer $i \leq m$. Then there exist three distinct $i$-step predators $x$, $y$, and $z$ of $u$. By part (1), $u$ has an $(m-i)$-step prey $v$. Then $v$ is an $m$-step common prey of $x,y$, and $z$. Thus $x,y$, and $z$ form a triangle in $C^m(D)$, a contradiction.
Hence part (2) is true.

Part (3)  immediately follows from the definition of internally secure lane and part (2).

To show part (4),
let $W=w \to v_{1} \to \cdots \to v_l$ and $W'=w'\to v'_1 \to \cdots \to v'_{l'} $ be a pair of internally secure lanes sharing $v$ as an interior vertex for some positive integers $l$ and $l'$.
Then $v=v_k=v'_{k'}$ for some $k \in \{1,\ldots,l-1\}$ and $k' \in \{1,\ldots,l'-1\}$.
By the definition of internally secure lane,
$d^-(v_i)=d^-(v'_{i'})=1$ for each $1\leq i\leq k$ and $1\leq i'\leq k'$.
Therefore the $(w,v_k)$-section of $W$ and the $(w',v'_{k'})$-section of $W'$ coincide.
Thus part (4) is true.
\end{proof}

 \begin{Thm} \label{thm:fundamental}
Let $G$ be the $m$-step competition graph of a digraph $D$ such that $G$ is triangle-free and has the edges as many as the vertices.
Then the following are true:
\begin{itemize}
	\item[(1)] For each vertex $u$ of outdegree at least $2$ in $D$, each prey of $u$ has indegree $2$ in $D$.
	\item[(2)] Every vertex in $D$ lies on some internally secure lane.
	\item[(3)] Each internally secure lane of $D$ has length $m$.
\end{itemize}
	\end{Thm}
\begin{proof}
We consider a set \[A=\{(u,\{v,w\})  \mid v\neq w, \{v,w\} \subseteq N^-_m(u)\}. \]
By the definition of $m$-step competition graph, $|A| \geq |E(G)|$.
Thus, by the definition of $A$ and Lemma~\ref{lem:triangle-free-chars}(2),
\[ |E(G)|\leq |A| = \sum_{v \in V(D)} {d^-_m(u) \choose 2 } \leq \sum_{v \in V(D)} {2\choose 2 } = |V(D)|=|V(G)|.\]
Then, since $|E(G)|=|V(G)|$ by the hypothesis,
\begin{equation} \label{eq:connect-triang-tree-1}
 d^-_m(v)=2
\end{equation} for each vertex $v$ in $D$.
In addition, if $u$ and $v$ are adjacent in $G$, then
\[
|N^+_m(u) \cap N^+_m(v) | =1,
\]
so, for each pair of vertices $u$ and $v$ in $D$,
\[
|N^+_m(u) \cap N^+_m(v) | \leq 1
\]
and
\begin{equation} \label{eq:-}
 |N^-_m(u) \cap N^-_m(v) | \leq 1.
 \end{equation}
Suppose for a contradiction that there exist two vertices $u$ and $v$ such that $|N^+_j(u) \cap N^+_j(v) | \geq 2$ for some positive
integer $j < m$.
Take two distinct vertices $w_1$ and $w_2$ in $N^+_j(u) \cap N^+_j(v)$.
Then $\{u,v\} \subseteq N^-_j(w_1)\cap N^-_j(w_2)$.
Therefore $N^-_j(w_1)= N^-_j(w_2)=\{u,v\}$ by Lemma~\ref{lem:triangle-free-chars}(2).
Thus $N^-_m(w_1)= N^-_m(w_2)$.
Then, since $d^-_m(w_1)=d^-_m(w_2)=2$ by \eqref{eq:connect-triang-tree-1},
$|N^-_m(w_1) \cap N^-_m(w_2)|=2$, which contradicts \eqref{eq:-}.
Therefore, for each pair of vertices $u$ and $v$,
\begin{equation} \label{eq:connect-triang-tree-4}
|N^+_i(u) \cap N^+_i(v) | \leq 1
\end{equation} for any positive
integer $i \leq m$.

To show part (1) by contradiction, suppose that there exist a vertex $u$ of outdegree at least $2$ and a prey $v$ of $u$ has indegree not equal to $2$ in $D$.
Then, by Lemma~\ref{lem:triangle-free-chars}(2),
$v$ has indegree $1$.
In addition, since $u$ has outdegree at least $2$, $u$ has a prey $w$ other than $v$.
By \eqref{eq:connect-triang-tree-1},
 $N^-_m(v)=\{x,y\}$ for some vertices $x$ and $y$ in $D$.
 Since $u$ is the only predator of $v$,
$N^-_{m-1}(u)=\{x,y\}$.
Therefore $ \{x,y\}\subseteq N^-_m(w)$ and so $\{x,y\} \subseteq N^-_m(v) \cap N^-_m(w)$, which contradicts \eqref{eq:-}.
Hence part (1) is true.

To show part (2), take a vertex $v$ in $D$.
If any $i$-step prey of $v$ has indegree at most $1$ for each $1\leq i \leq m$,
then there is an $m$-step prey of $v$ having $m$-step indegree $1$, which contradicts~\eqref{eq:connect-triang-tree-1}.
Therefore there exists a $j$-step prey $x$ of $v$ having indegree at least $2$ for some $j\in \{1,\ldots,m\}$.
Thus $x$ has indegree $2$ by Lemma~\ref{lem:triangle-free-chars}(2).
If $v$ has indegree $2$,
then a directed $(v,x)$-walk contains an internally secure lane on which $v$ lies.
Suppose that $v$ has indegree not equal to $2$.
Then $v$ has indegree $1$ by Lemma~\ref{lem:triangle-free-chars}(2) and \eqref{eq:connect-triang-tree-1}.
If each $i$-step predator of $v$ has indegree at most $1$ for each $1\leq i \leq m-1$,
then $d^-_m(v)<2$, a contradiction to \eqref{eq:connect-triang-tree-1}.
Thus there exists a $k$-step predator $y$ of $v$ having indegree at least $2$ for some $k\in \{1,\ldots,m-1\}$.
Then, by Lemma~\ref{lem:triangle-free-chars}(2),
each of $x$ and $y$ has indegree $2$.
Hence we may conclude that every vertex in $D$ lies on some internally secure lane.

To show part (3), take an internally secure lane $W:=v_{0} \to v_{1} \to \cdots \to v_j$ for some positive integer $j$.
Then, by the definition of an internally secure lane,
\[
N^-(v_0)=\{x,y\}, \quad N^-(v_j) = \{v_{j-1},w\}
\]
for some vertices $x,y$, and $w$ in $D$.
Then
\begin{equation}\label{eq:connect-triang-tree-2-1-1-0}
N^-_{i+1}(v_i) \supseteq \{x,y\}
\end{equation}
for each $0\leq i \leq j$.
If $j \geq 2$, then
\begin{equation}\label{eq:connect-triang-tree-2-1-1}
N^-(v_{i+1})=\{v_{i}\}
\end{equation}
for each $0 \leq i \leq j-2$ by the definition of internally secure lane  and so, by part (1),
\begin{equation}\label{eq:connect-triang-tree-3-1}
N^+(v_i)=\{v_{i+1}\}
\end{equation}
 for each $0 \leq i \leq j-2$.

To reach a contradiction, suppose $j \neq m$.
If $j > m$, then $j \geq 2$ and so, by~\eqref{eq:connect-triang-tree-2-1-1},
$N^-_m(v_{j-1})=\{v_{j-m-1}\}$, which contradicts \eqref{eq:connect-triang-tree-1}.
Therefore $j <m$.
Then
\[N^-_{i+1}(v_i) = \{x,y\}\] for each $0 \le i \le j$ by~\eqref{eq:connect-triang-tree-2-1-1-0} and Lemma~\ref{lem:triangle-free-chars}(2).
Since a $j$-step predator of $w$ is a $(j+1)$-step predator of $v_j$, $N^-_{j}(w) \subseteq \{x,y\}$.
Since $N^-_{j}(v_{j-1})= \{x,y\}$,
$N^-_{j}(w) \subsetneq \{x,y\}$ by~\eqref{eq:connect-triang-tree-4}.
Then, since $D$ has no source by~\eqref{eq:connect-triang-tree-1},
$N^-_{j}(w) = \{x\}$ or $\{y\}$.
Without loss of generality, we may assume $N^-_{j}(w) = \{x\}$.
Then there exists a directed walk $W^*$ of length $j$ from $x$ to $w$.
If $j=1$, then $d^-(w)=1$, $w$ is a prey of $x$, and $\{v_0,w\} \subseteq N^+(x)$, which contradicts part (1).
Therefore $j\geq 2$.
Let $z$ be the vertex which $x$ is immediately going toward on $W^*$.
If $z=v_0$, then
$w=v_{j-1}$ by~\eqref{eq:connect-triang-tree-3-1} and we reach a contradiction.
Therefore $z \neq v_0$.
Thus $x$ has outdegree at least $2$.
Hence $z$ has a predator $x'$ other than $x$ by part (1).
Then attaching the arc $(x',z)$ to the $(z,w)$-section of $W^*$ results in a directed walk of length $j$ from $x'$ to $w$.
Therefore $\{x,x'\} \subseteq N^-_{j}(w)$, which contradicts the assumption that $N^-_{j}(w) = \{x\}$.
Thus we may conclude that $j=m$.
Since $W$ was arbitrarily chosen from $D$, part (3) is valid.
\end{proof}

Given an internally secure lane $W=v_0 \to v_1 \to \cdots \to v_m$, we call $v_k$ the {\it $k$th interior vertex} of $W$ for each $1\leq k <m$.

\begin{Thm} \label{thm:triangle-free-m-components}
If the $m$-step competition graph of a digraph is triangle-free and has the edges as many as the vertices, then it has at least $m$ components.
\end{Thm}
\begin{proof}
Suppose that there exists a digraph $D$ whose $m$-step competition graph $G$ is triangle-free and has the edges as many as the vertices.
Take an internally secure lane $W$ in $D$ (it exists since every vertex in $D$ lies on some internally secure lane by Theorem~\ref{thm:fundamental}(2)).
By Theorem~\ref{thm:fundamental}(3), $W$ has length $m$. Let $W=v_{0} \to v_{1} \to \cdots \to v_m$ and take $v_k$ for some $k \in \{1,\ldots, m-1\}$.
Suppose that
 there exists a vertex $w_k$ adjacent to $v_k$ in $G$.
 Then they have an $m$-step common prey
 in $D$ and so $v_k$ and $w_k$ have an $l$-step common prey $z$ that has indegree at least $2$ in $D$ for some $l \in \{1,\ldots,m\}$.
 Therefore
 $z$ has indegree $2$ by Lemma~\ref{lem:triangle-free-chars}(2).
Then there exist a directed $(v_k,z)$-walk $W_1$ of length $l$ and a directed $(w_k,z)$-walk $W_2$ of length $l$.
Since $z$ has indegree $2$,
the directed walk $W'$ obtained by concatenating the $(v_0,v_k)$-section of $W$ and $W_1$ contains an internally secure lane.
We note that the origin and the terminus of $W'$ have indegree $2$.
Then, since the length of $W'$ is at most $2m-1$, $W'$ must be an internally secure lane by Lemma~\ref{lem:triangle-free-chars}(3) and Theorem~\ref{thm:fundamental}(3). Thus $l=m-k$.
By Theorem~\ref{thm:fundamental}(3) again, $W_2$ must be a section of an internally secure lane of length $l$.
Thus we may conclude that
\begin{enumerate}
\item[($\star$)]
each vertex  adjacent to the  $k$th interior vertex of an internally secure lane is the $k$th interior vertex of an internally secure lane.
\end{enumerate}
 Now, for each $1\leq i \leq m-1$,
 we define a vertex set $\mathcal{V}_i$ as follows:
  $v\in \mathcal{V}_i$ if and only if
 $v$ is the $i$th interior vertex of some internally secure lane.
 Then $\mathcal{V}_i \cap \mathcal{V}_j = \emptyset$
 if $i \neq j$ by Lemma~\ref{lem:triangle-free-chars}(4).
Moreover, since $v_i \in \mathcal{V}_i $,
$ \mathcal{V}_i \neq \emptyset $ for each $1\leq i \leq m-1$.

Now we choose $j \in \{1,\ldots,m-1\}$.
Then take a vertex $v$ in $\mathcal{V}_j$ and
let $X$ be the component containing $v$ in $G$.
Take a vertex $w$ in $X$.
Then there exists a shortest path $P$ from $v$ to $w$.
By repeatedly applying ($\star$) to each vertex on $P$ from the nearest to the farthest from $v$, we may show that $w \in \mathcal{V}_j$.
Therefore $X \subseteq \mathcal{V}_j$.
Then, since $\mathcal{V}_i \cap \mathcal{V}_j = \emptyset$ for $i \neq j$ and $\mathcal{V}_k \neq \emptyset$ for each $1\leq k \leq m-1$,
 $G$ has at least $m-1$ components each of which is included in $\mathcal{V}_j$ for some $j \in \{1,\ldots,m-1\}$.
We note that each vertex in $\bigcup_{i=1}^{m-1}\mathcal{V}_i$ is an interior vertex of an internally secure lane and so, by the definition of internally secure lane, it has indegree $1$ for each $1\leq i \leq m-1$.
Therefore the origin and the terminus of any internally secure lane in $D$ cannot belong to any of the components obtained previously.
Hence $G$ has at least $m$ components.
\end{proof}
 Now we are ready to prove Theorem~\ref{thm:triangle-free-tree}.

\begin{proof}[Proof of Theorem~\ref{thm:triangle-free-tree}]
Suppose, to the contrary, that there exists a digraph $D$ such that $C^m(D)$ is triangle-free and connected but is not a tree.
Then, since $C^m(D)$ is connected but is not a tree, $|E(C^m(D))| > |V(C^m(D))|-1$.
By Lemma~\ref{lem:triangle-free-chars}(2), $d^-_m(v)=2$ for each vertex $v$ in $D$. Therefore $|E(C^m(D))| \leq |V(C^m(D))|$ and so $|E(C^m(D))| =|V(C^m(D))|$.
Thus $C^m(D)$ is disconnected by Theorem~\ref{thm:triangle-free-m-components} and we reach a contradiction.
Hence $C^m(D)$ is a tree.
\end{proof}

 \section{Digraphs whose $m$-step competition graphs are trees}\label{sec:digraphs_tree}
In this section, we deduce basic properties of digraphs whose $m$-step competition graphs are trees.

We call a digraph $D$ with at least three vertices an {\it $m$-step tree-inducing digraph } if the $m$-step competition graph of $D$ is a tree for some integer $m \geq 2$. A digraph is said to be a {\it tree-inducing digraph } if it is an $m$-step tree-inducing digraph
for some integer $m \geq 2$.

\begin{Prop} \label{prop:proposition2-5}
 Let $D$ be an $m$-step tree-inducing digraph. Then $N^+_i(u)\neq N^+_i(v)$ for any distinct vertices $u$ and $v$ in $D$ and any  positive integer $i \leq m$.
\end{Prop}

\begin{proof}
Suppose $N^+_i(u)=N^+_i(v)$ for some distinct $u$ and $v$ in $D$ and a positive integer $i \leq m$.
Then $N^+_m(u)=N^+_m(v)$.
Since $N^+_m(u)\neq\emptyset$ by Lemma~\ref{lem:triangle-free-chars}(1),
$u$ and $v$ are adjacent in $C^m(D)$.
Moreover, $N^-_m(w)=\{u,v\}$
for each vertex $w \in N^+_m(u)$
by Lemma~\ref{lem:triangle-free-chars}(2).
Therefore an edge $uv$ is a component in $C^m(D)$, a contradiction to the connectedness of $C^m(D)$.
\end{proof}

\begin{Prop}
 [Helleloid \cite{helleloid2005connected}]\label{Prop:Helle}
 Let $D$ be a digraph with $n$ vertices whose $m$-step competition graph $C^m(D)$ is a tree.
   Then there is a one-to-one correspondence between the $n-1$ pairs of adjacent vertices in $C^m(D)$ and $n-1$ of $n$ vertices of D; namely, all but one vertex in $D$ serves as the $m$-step common prey for exactly one pair of adjacent
vertices in $C^m(D)$. The remaining vertex of $D$ can either be the m-step prey of no vertices, of any one vertex, or of any two
vertices adjacent in $C^m(D)$.
\end{Prop}
Based upon the above proposition, Belmont \cite{belmont2011complete} called the remaining vertex $\alpha$ in $D$ not assigned in a bijection between the edges of $C^m(D)$ and $n-1$ of the $n$ vertices of $D$ {\it anomaly}.
The author observed that
if the remaining vertex of $D$ is the $m$-step prey of any two vertices adjacent in $C^m(D)$, then
 the anomaly is not well-defined since there are two vertices with this property and went on to call the one arbitrarily chosen between the two vertices an anomaly.
By the definition of an anomaly,
it is clear that each tree-inducing digraph has a unique anomaly.

The following proposition gives a necessary and sufficient condition for a vertex of a tree-inducing digraph $D$ being the anomaly, which is actually a restatement of Proposition~\ref{Prop:Helle}.

\begin{Prop} \label{prop:idle-if-only-if}
Let $D$ be an $m$-step tree-inducing digraph.
Then $\alpha$ in $D$ is the anomaly if and only if $\alpha$ has either at most one $m$-step predator in $D$ or exactly two $m$-step predators that have another vertex $\beta$ as an $m$-step common prey in $D$.
Furthermore, if the latter of the ``if" part is true, then $\alpha$  and $\beta$ are the only vertices that share two $m$-step common predators.
\end{Prop}

\begin{Cor} \label{cor:anomaly-finder}
Let $D$ be an $m$-step tree-inducing digraph.
Then the following are true:
\begin{enumerate}[{(1)}]
\item
If $|N^+_m(u) \cap N^+_m(v)| \geq 2$ for some $u$ and $v$ in $D$, then the anomaly is contained in $N^+_m(u) \cap N^+_m(v)$.
\item If $d^-_m(v) \leq 1$, then $v$ is the anomaly.
\end{enumerate}
\end{Cor}
\begin{Cor} \label{cor:no-more-two-common}
Let $D$ be an $m$-step tree-inducing digraph.
For the anomaly $\alpha$ of $D$, exactly one of the following is true:
\begin{enumerate}[{(i)}]
\item $\alpha$ has
exactly two $m$-step predators that have  a vertex $v$ other than $\alpha$ as an $m$-step common prey in $D$, and $\alpha$ and $v$ are the only vertices that share two $m$-step common predators;
\item $\alpha$ has at most
one $m$-step predator and each vertex except $\alpha$ has exactly two $m$-step predators.
\end{enumerate}
\end{Cor}

\begin{Thm} \label{thm:triangle-free-chars}
Let $D$ be a digraph such that $C^m(D)$ is triangle-free and connected.
Then the following are true:
\begin{enumerate}[{(1)}]
\item $|\bigcup_{v\in U}N^+(v)| \geq |U| $ for any proper subset $U$ of $V(D)$.

\item For any vertices $u $ and $v$ in $D$, 
$|N^+_i(u) \cap N^+_i(v) | \leq |N^+_j(u) \cap N^+_j(v) | $ for any positive integers $i,j$ with $i \leq j \leq m$.

\item For each vertex $v$ in $D$, 
$d^+_i(v) \leq d^+_j(v) $  for any positive integers $i,j$ with $i\leq j \leq m$.

\end{enumerate}
\end{Thm}
\begin{proof}
We begin with the proof of the following claim:
\begin{Claim*} \label{claim:first}
For any nonempty proper subset $U$ of $V(D)$, there exists a vertex $u \in \bigcup_{v\in U}N^+(v)$ such that $|N^-(u) \cap U|= 1$.
\end{Claim*}
To reach a contradiction, suppose that there exists a nonempty proper subset $U^*$ of $V(D)$ 
with $|N^-(v)\cap U^*| \neq 1$ for each vertex $v$ in $\bigcup_{v\in U^*}N^+(v)$. Since any vertex in $\bigcup_{v\in U^*}N^+(v)$ is a prey of a vertex in $U^*$, $|N^-(v)\cap U^*| \geq 1$ for each vertex $v$ in $\bigcup_{v\in U^*}N^+(v)$ and so, by Lemma~\ref{lem:triangle-free-chars}(2), $|N^-(v)\cap U^*|= 2 $. 
Since $U^*$ is a proper subset of $V(D)$, $V(D) - U^* \neq \emptyset$.
 Since $U^* \neq \emptyset$ and $C^m(D)$ is connected, there exists a vertex $x$ in $V(D) - U^*$ which is adjacent to a vertex $w \in U^*$ in $C^m(D)$.
 Then, $w$ and $x$ have an $m$-step common prey $a_m$ and so
 there exists a directed $(w,a_m)$-walk of length $m$ in $D$.
 Let $a_1$ be the vertex outgoing from $w$ on this walk.
 Then $a_1 \in N^+(w) \subseteq \bigcup_{v\in U^*}N^+(v)$.
 By the choice of $U^*$, each vertex of $\bigcup_{v\in U^*}N^+(v)$ has two predators in $U^*$.
Thus there is the other predator $y$ of $a_1$ that belongs to $U^*$.
  Since $x \not\in U^*$ and $y \in U^*$,  $y$ and $x$ are distinct. Further, $y$ is an $m$-step predator of $a_m$ and so $\{w,x,y\} \subseteq N^-_m(a_m)$, which is a contradiction to Lemma~\ref{lem:triangle-free-chars}(2).
  Therefore the claim is valid.

   We prove part (1) by induction on $|U|$.
   If $U=\emptyset$, then the inequality trivially holds.
Now suppose that $|\bigcup_{v\in U}N^+(v)|\geq |U|$ for any proper vertex subset $U$ of $V(D)$ with $|U|\leq k$ for a nonnegative integer $k$ such that $k \leq |V(D)|-2$.
Take a proper subset $W$ of $V(D)$ with $k+1$ elements.
 Then $W$ is nonempty.
 Suppose, to the contrary, that $ |\bigcup_{v\in W}N^+(v)| < |W|$.
 By the above claim, there exists a vertex $w \in \bigcup_{v\in W}N^+(v) $ such that $|N^-(w) \cap W|=1$. Then $N^-(w)\cap W=\{x\}$ for some vertex $x \in W$.
 Since $x$ is the only predator of $w$ in $W$, $w \notin \bigcup_{v\in W-\{x\}}N^+(v)$.
 Then, since $w \in \bigcup_{v\in W}N^+(v)$, \[\left|\bigcup_{v\in W-\{w\}}N^+(v)\right| \leq |\bigcup_{v\in W}N^+(v)|-1.\]
 By the assumption that $|\bigcup_{v\in W}N^+(v)| < |W|$, \begin{equation}\label{eq:inequal} \left|\bigcup_{v\in W-\{w\}}N^+(v)\right| < |W|-1. \end{equation}
Yet, since $W-\{x\}$ is a proper subset of $V(D)$ with $k$ elements, by the induction hypothesis, \[\left|\bigcup_{v\in W-\{w\}}N^+(v)\right|\geq |W-\{x\}|=|W|-1,\]
 which contradicts \eqref{eq:inequal}. Therefore part (1) is true.

 To verify part (2),
 take two vertices $u$ and $v$ of $D$ and fix a positive integer $i$.
We first consider the case $N^+_i(u) \cap N^+_i(v) =V(D)$ and take a vertex $w$.
Then $w$ has at least one predator $z \in N^+_i(u) \cap N^+_i(v)$.
Therefore $w \in N^+_{i+1}(u) \cap N^+_{i+1}(v)$.
Since $w$ was arbitrarily chosen from $D$,
 $N^+_{i+1}(u) \cap N^+_{i+1}(v) =V(D)$.

Now consider the case $N^+_i(u) \cap N^+_i(v) \subsetneq V(D)$.

By part (1),
\[\left|\bigcup_{w\in N^+_i(u)\cap N^+_i(v) }N^+(w)\right| \geq \left|N^+_i(u)\cap N^+_i(v)\right|. \]
Then, since
\[\bigcup_{w\in N^+_i(u)\cap N^+_i(v) }N^+(w) \subseteq 
N^+_{i+1}(u) \cap N^+_{i+1}(v),
\]
\[|N^+_i(u)\cap N^+_i(v)| \leq |N^+_{i+1}(u) \cap N^+_{i+1}(v)|.\]
We may repeat this process until we have $|N^+_i(u)\cap N^+_i(v)|\leq |N^+_j(u) \cap N^+_j(v)|$ for any integer $j$ with $i \leq j \leq m$, which will complete the proof of part (2).
Part (3) is an immediate consequence of part (2).
\end{proof}

The inequality in Theorem~\ref{thm:triangle-free-chars}(1) is true only for a {\it proper} subset of $V(D)$ as shown by the following example.
\begin{ex} \label{ex:star}
Fix some integer $m \geq 2$.
Let $D$ be the windmill digraph with $V(D)=\{v_1,v_2,\ldots,v_m,w\}$ and $A(D)=\{(v_i,v_{i+1}) \mid 1 \leq i < m \} \cup \{(v_m,v_1)\} \cup \{(w,v_i) \mid 1 \leq i \leq m \}$. Then $C^m(D)$ is a star graph with the center $w$. However, $|N^+(V(D))| < |V(D)|$ since $w \notin N^+(V(D))$.
\end{ex}

\begin{Prop} \label{prop:preydegree}
Let $D$ be an $m$-step tree-inducing digraph with the anomaly $\alpha$.
Suppose $d^+_i(u) \geq l$ for a vertex $u$ in $D$ and positive integers $l$ and $i \leq m$.
Then the degree of $u$ is at least $l-1$ in $C^m(D)$.
Especially, if the degree of $u$ equals $l-1$ in $C^m(D)$, then $d^+_m(u)=l$ and $\alpha \in N^+_m(u)$ in $D$.
\end{Prop}

\begin{proof}
Denote by $d(u)$ the degree of a vertex $u$ in $C^m(D)$. Since $d^+_i(u) \geq l$, $d^+_m(u) \geq l$ by Theorem~\ref{thm:triangle-free-chars}(3).
Then
there are at least $l-1$ vertices in $N^+_m(u)$ each of which serves as the $m$-step common prey for exactly one pair of adjacent
vertices in $C^m(D)$ by Proposition~\ref{Prop:Helle}.
Therefore $d(u) \geq l-1$.
To show the ``especially" part, suppose, to the contrary, that $d(u)=l-1$ but $d^+_m(u) \neq l$. Then, by the hypothesis, $d^+_m(u) \geq l+1$. Thus, by the previous argument, $d(u) \geq l$, a contradiction.
Therefore $d^+_m(u) = l$.
Yet, $d(u)=l-1$, so $\alpha \in N^+_m(u)$.
\end{proof}

 \begin{Thm} \label{thm:existence-of-cycle}
Let $D$ be a tree-inducing digraph without sources.
Then each vertex lies on a directed cycle in $D$.
\end{Thm}

\begin{proof}
Suppose, to the contrary, that there exists a vertex $u$
which does not lie on any directed cycle in $D$.
Let $A, B,$ and $C$ be subsets of $V(D)$ such that
 \[A=\bigcup_{i \geq 1}N^+_i(u); \quad B=\bigcup_{i \geq 1}N^-_i(u); \quad C=V(D)-(A\cup B).\]
By the hypothesis, $N^-(u)\neq \emptyset$, so $B \neq \emptyset$.
By Lemma~\ref{lem:triangle-free-chars}(1),  $N^+(u)\neq \emptyset$, so $A \neq \emptyset$.
Since there is no directed cycle containing $u$, $A \cap B = \emptyset$.
If $u \in A $ or $u \in B$, then there exists a closed directed walk containing $u$ and so there exists a directed cycle containing $u$, which contradicts our assumption.
Thus $u \in C$ and so $C \neq \emptyset$.
We will claim the following:
\begin{equation} \label{eq:noarc1}  A \nrightarrow B, \quad A \nrightarrow C,  \quad \text{and} \quad C \nrightarrow B  \end{equation} where $X \nrightarrow Y $ for vertex sets $X$ and $Y$ of $D$ means that there is no arc from a vertex in $X$ to a vertex in $Y$.
Take three vertices $a\in A$, $b\in B$, and $c\in C$.

If there exists an arc $(a,b)$,
then a directed $(u,a)$-walk, the arc $(a,b)$, and a directed $(b,u)$-walk form a closed directed walk containing $u$ and we reach a contradiction.
If there exists an arc $(a,c)$ (resp.\ an arc $(c,b)$), then a directed $(u,a)$-walk and the arc $(a,c)$ form a directed $(u,c)$-walk (resp.\ the arc $(c,b)$ and a directed $(b,u)$-walk form a directed $(c,u)$-walk), which contradicts the choice of $c$.
Since $a$, $b$, and $c$ were arbitrarily chosen from $A$, $B$, and $C$, respectively,
the claim is valid.

By choice of the set $C$,  \begin{equation} \label{eq:noarc2}
 \{u\} \nrightarrow C, \quad \text{and} \quad C \nrightarrow \{u\}
\end{equation}
Since $D$ is a tree-inducing digraph, by Proposition~\ref{Prop:Helle}, there is a bijection between $E(C^m(D))$ and $V(D)-\{w\}$  where $w$ is the anomaly.
Then each of at least $|B|$ vertices in $B \cup \{u\}$ serves as an $m$-step common prey of a pair of adjacent vertices in $C^m(D)$.
Since $u \in C$, no vertex in $A \cup C $ can be an $m$-step predator of a vertex in $B \cup \{u\}$ by \eqref{eq:noarc1} and \eqref{eq:noarc2}.
Therefore each vertex in $B \cup \{u\}$ has an $m$-step predator only in $B$.
Consequently, we may conclude that the subgraph $H$ of $C^m(D)$ induced by $B$ has at least $|B|$ edges.
Thus $H$ contains a cycle.
Then this cycle is contained in $C^m(D)$ and we have reached a contradiction to the hypothesis that $C^m(D)$ is a tree.
\end{proof}
\begin{rem}
It is likely that, for each vertex of a digraph without sources, there is a directed cycle containing it.
However, it is not true.
For example, the digraph  given in Figure~\ref{fig:examplenocycle} has no source and no directed cycle containing the vertex $v_3$.
\begin{figure}
\begin{center}
 \begin{tikzpicture}[auto,thick]
    \tikzstyle{player}=[minimum size=5pt,inner sep=0pt,outer sep=0pt,fill,color=black, circle]
    \tikzstyle{source}=[minimum size=5pt,inner sep=0pt,outer sep=0pt,ball color=black, circle]
    \tikzstyle{arc}=[minimum size=5pt,inner sep=1pt,outer sep=1pt, font=\footnotesize]
    \path (0:0cm)     node [player]  (a) [label=left:$v_2$] {};
    \path (180:1cm)   node [player]  (b) [label=left:$v_1$] {};
    \path (0:1cm)      node [player]  (c) [label=below:$v_3$] {};
    \path (0:2cm)      node [player]  (d) [label=right:$v_4$] {};
    \path (0:3cm)      node [player]  (e) [label=right:$v_5$] {};
   \draw[black,thick,-stealth] (a) to[in=90, out=90](b);
   \draw[black,thick,-stealth] (b) to[in=270, out=270](a);
   \draw[black,thick,-stealth] (a) - +(c);
   \draw[black,thick,-stealth] (c) - +(d);
   \draw[black,thick,-stealth] (d) to[in=90, out=90](e);
   \draw[black,thick,-stealth] (e) to[in=270, out=270](d);
    \end{tikzpicture}
\caption{A digraph with no sources and no directed cycles containing $v_3$}
\label{fig:examplenocycle}
\end{center}
\end{figure}
\end{rem}

\begin{rem}
For some tree-inducing digraph $D$ with a source, Theorem~\ref{thm:existence-of-cycle} may be false. For example, the vertex $w$ given in Example~\ref{ex:star} does not lie on any directed cycle in $D$.
\end{rem}
 If an $m$-step tree-inducing digraph $D$ has a loop incident to a vertex on a directed cycle of length $2$, then $C^m(D)$ is not a star graph, which will be shown in Lemma~\ref{lem:star-not-duck}. We call such a configuration a {\it duck digraph}.
That is, a duck digraph is isomorphic to the digraph given in Figure~\ref{fig:duckgraph}.
We call the vertex with a loop in a duck digraph the {\it neck vertex }and the other one the {\it tail vertex}.
Given a digraph $D$, if $D$ contains no subdigraph isomorphic to a duck digraph, we call $D$ a {\it duck-free digraph}.

\begin{figure}
\begin{center}
    \begin{tikzpicture}[auto,thick]
    \tikzstyle{player}=[minimum size=5pt,inner sep=0pt,outer sep=0pt,fill,color=black, circle]
    \tikzstyle{source}=[minimum size=5pt,inner sep=0pt,outer sep=0pt,ball color=black, circle]
    \tikzstyle{arc}=[minimum size=5pt,inner sep=1pt,outer sep=1pt, font=\footnotesize]
    \path (0:0cm)   node [player]  (a)  [label=below:$v_1$] {};
    \path (0:1.5cm)     node [player]  (b)  [label=below:$v_2$] {};
   \draw[black,thick,-stealth] (a)to[out=45, in=135, distance=0.5cm](b);
   \draw[black,thick,-stealth] (a) to[out=100, in=150, distance=0.75cm] (a);
   \draw[black,thick,-stealth] (b) to[out=-135, in=-45,  distance=0.5cm](a);
    \end{tikzpicture}
\caption{A duck digraph with the neck vertex $v_1$ and the tail vertex $v_2$}
\label{fig:duckgraph}
\end{center}
\end{figure}

\begin{Prop} \label{prop:loopcondition}
Let $D$ be an $m$-step tree-inducing digraph 
such that (a) there exists a vertex $u$ incident to a loop in $D$ and (b) if $m=2$, then $D$ is duck-free. Then exactly one of the following statements is true.
\begin{itemize}
\item[(i)]  The vertex $u$ has exactly one predator other than $u$ and $ N^+(u)=\{u\} $.
\item[(ii)] The vertex $u$ has at least one prey other than $u$ and $ N^-(u)=\{u\} $.
\end{itemize}
  Furthermore, if (i) holds for the vertex $u$, then the vertex in $N^-(u) - \{u\}$    either is a source or is incident to a loop and (ii) holds for it.
\end{Prop}

\begin{proof}
By the condition (a), $\{u\} \subseteq N^-(u)$ and  $\{u\} \subseteq N^+(u)$.
If $N^-(u) =N^+(u)=\{u \}$, then $u$ is an isolated vertex in $C^m(D)$, which is a contradiction.
Therefore \begin{equation} \label{eq:prop:loopcondition-0-3} \{u\} \subsetneq N^-(u) \text{ or } \{u\}\subsetneq N^+(u). \end{equation}

If $N^-(u)= \{u\}$,
then the statement (i) does not hold for $u$ and, by~\eqref{eq:prop:loopcondition-0-3}, $\{u\}\subsetneq N^+(u)$ so that (ii) holds for $u$.

Now suppose that $\{u\} \subsetneq N^-(u)$.
Then (ii) does not hold for $u$.
Since $d^-(u) \le 2$ by Lemma~\ref{lem:triangle-free-chars}(2),
\[ N^-(u) =\{u,v\}\] for some vertex $v$ in $D$.
 Then \[u\in N^+(v).\]
Suppose, to the contrary, that there exists a vertex $z$ in $N^-(v) - \{u,v\}$.
Then, by using the loop incident to $u$, we may produce a directed $(u,u)$-walk, a directed $(v,u)$-walk, and a directed $(z,u)$-walk, respectively, of length $m$.
Since $m \geq 2$, $\{u,v,z\} \subseteq N^-_m(u)$, which is a contradiction to Lemma~\ref{lem:triangle-free-chars}(2).
Hence \begin{equation} \label{eq:prop:loopcondition-0-2} N^-(v) \subseteq \{u,v\}. \end{equation}
To reach a contradiction, suppose that $u \in N^-(v)$. Then \begin{equation} \label{eq:prop:loopcondition-0} \{u,v\} \subseteq N^+(u). \end{equation}
In this case, the subdigraph of $D$ induced by $\{u,v\}$ is a duck digraph.
Then, by the condition (b), $m \geq 3$.
Suppose that there exists a vertex $x$ in $N^+(v) - \{u,v\}$.
By using the loop incident to $u$, we have $\{u,v,x\} \subseteq N^+_m(v) \cap N^+_m(u)$ for each $m \geq 3$ and so, by Corollary~\ref{cor:anomaly-finder}(1), the anomaly of $D$ is contained in $N^+_m(v) \cap N^+_m(u)$.
Therefore $|N^+_m(v) \cap N^+_m(u)|=2$ by  Corollary~\ref{cor:no-more-two-common}(i), which contradicts the fact that $\{u,v,x\} \subseteq N^+_m(v) \cap N^+_m(u)$.
Thus \begin{equation} \label{eq:prop:loopcondition-0-1}u \in N^+(v) \subseteq \{u,v\}.\end{equation}
Suppose that there exists a vertex $y$ in $N^+(u)-\{u,v\}$. Then, by using the loop incident to $u$, we have $\{u,v,y\} \subseteq N^+_m(u) \cap N^+_m(v) $ and so we reach a contradiction similarly as above.
Therefore $N^+(u)=\{u,v\}$ by~\eqref{eq:prop:loopcondition-0}.
Thus, by~\eqref{eq:prop:loopcondition-0-1}, $N^+_m(u)=N^+_m(v)=\{u,v\}$, which contradicts Proposition~\ref{prop:proposition2-5}.
Hence $u \not\in N^-(v)$ and so, by~\eqref{eq:prop:loopcondition-0-2},
\begin{equation} \label{eq:prop:loopcondition}
 N^-(v) =\emptyset \quad \text{or} \quad N^-(v) =\{v\}. \end{equation}
 Then, by \eqref{eq:prop:loopcondition} and Corollary~\ref{cor:anomaly-finder}(2), $v$ is the anomaly.
 Now suppose, to the contrary, that $\{u\} \subsetneq N^+(u)$.
Take a vertex $w$ in $N^+(u)- \{u\}$.
Then $\{u,w\} \subseteq N^+_m(u) \cap N^+_m(v) $.
Since $v$ is the anomaly of $D$, $v \in N^+_m(u) \cap N^+_m(v)$ by Corollary~\ref{cor:anomaly-finder}(1).
However, $v \notin N^+_m(u)$ by \eqref{eq:prop:loopcondition}, which is a contradiction.
Hence $N^+(u)=\{u\}$ and so the statement (i) holds for $u$.
The ``furthermore" part is  true by~\eqref{eq:prop:loopcondition}.
\end{proof}

\begin{Cor} \label{cor:duckfreem>3}
Let $D$ be an $m$-step tree-inducing digraph for an integer $m \ge 3$. Then $D$ is duck-free.
\end{Cor}

\begin{proof}
Since $m \geq 3$, the condition (b) in Proposition~\ref{prop:loopcondition} is vacuously satisfied.
Suppose that there exists a vertex $v$ with a loop in $D$. Then $v$ satisfies the condition (a) in Proposition~\ref{prop:loopcondition}.
Thus, by Proposition~\ref{prop:loopcondition},
$N^+(v)=\{v\}$ or $N^-(v)=\{v\}$.
Thus $D$ is duck-free.
\end{proof}

\begin{Thm} \label{thm:existenceofoutloop}
Let $D$ be a duck-free tree-inducing digraph with a loop and without sources. Then there is a vertex $u$ with outdegree at least $2$ and $N^-(u)=\{u\}$.
Moreover, $u$ is the only one vertex with this property and $d^-(v)=2$ for each vertex $v \in N^+(u)- \{u\}$.
\end{Thm}
\begin{proof}
We note that $D$ satisfies the conditions (a) and (b) of Proposition~\ref{prop:loopcondition}.
Let $w$ be a vertex incident to a loop.
If $|N^+(w)-\{w\}| \geq 1 $, then Proposition~\ref{prop:loopcondition}(ii) holds for $w$ and so we take $w$ as $u$.
Suppose that $|N^+(w)-\{w\}|=0$.
Then Proposition~\ref{prop:loopcondition}(i) holds.
By the ``furthermore" part of Proposition~\ref{prop:loopcondition}, the vertex in $N^-(w) - \{w\}$ either is a source or is incident to a loop and Proposition~\ref{prop:loopcondition}(ii) holds for it.
Since each vertex has indegree at least $1$ by the hypothesis, the latter is true and so we take the vertex in $N^-(w) - \{w\}$ as $u$.

To show the uniqueness,
suppose that there exist two vertices $x$ and $y$ each of which has outdegree at least $2$ and indegree $1$ and is incident to a loop.
Therefore $N^-_m(x)=\{x\}$ and  $N^-_m(y)=\{y\}$.
Then, by Corollary~\ref{cor:anomaly-finder}(2), $x$ and $y$ are anomaly.
Therefore $x=y$ by Corollary~\ref{cor:no-more-two-common}(ii).
Thus $u$ is the unique vertex with $d^+(u) \geq 2 $ and $N^-(u)=\{u\}$.

Suppose, to the contrary, that $d^-(v) \neq 2$ for some vertex $v \in N^+(u)- \{u\}$.
Then $d^-(v) \leq 1$ by Lemma~\ref{lem:triangle-free-chars}(2).
Since $D$ has no source by the hypothesis, $d^-(v) \geq 1$ and so $N^-(v) = \{u\}$.
On the other hand, since $D$ is a tree-inducing digraph without sources, there exists a directed cycle $C$ containing $v$ in $D$ by Theorem~\ref{thm:existence-of-cycle}.
Since $N^-(v) = \{u\}$, $u$ lies on $C$.
Therefore there exists the $(v,u)$-section of $C$. However, since $N^-(u)=\{u\}$, there is no directed $(v,u)$-walk in $D$ and we reach a contradiction.
\end{proof}

 \section{The digraphs whose $m$-step competition graphs are star graphs}
In this section, we completely characterize the digraphs whose $m$-step competition graphs are star graphs.
The following lemma is easy to check.
\begin{Lem} \label{lem:exactlyonecycle}
For a digraph $D$, $D$ is a vertex-disjoint union of directed cycles if and only if each vertex has outdegree $1$ in $D$ and any pair of vertices has no common prey in $D$.
\end{Lem}

 \begin{Thm} \label{thm:star-source}
 An $m$-step tree-inducing digraph having a source is a windmill digraph.
 \end{Thm}

 \begin{proof}
Let $D$ be an $m$-step tree-inducing digraph having a source $v$. Then $N^-_m(v)=\emptyset$, so $v$ is the anomaly by Corollary~\ref{cor:anomaly-finder}(2).
Therefore $v$ is the only source of $D$ by Proposition~\ref{Prop:Helle}.
Thus $D$ satisfies the condition $(\text{W}1)$
for being a windmill digraph.
In addition,
\begin{equation}\label{eq:noindegreestar1} d^-_m(u)=2 \quad \end{equation}
for each vertex $u \in V(D) - \{v\}$ by Corollary~\ref{cor:no-more-two-common}(ii).

Fix $u \in V(D)- \{v\}$.
Then $d^-(u) \geq 1$ by~\eqref{eq:noindegreestar1}. By Lemma~\ref{lem:triangle-free-chars}(2), $d^-(u)\leq 2$.
Suppose, to the contrary, that $d^-(u)=1$.
Then $N^-(u)=\{x\}$ for some vertex $x$ of $D$, so $u \notin \bigcup_{v\in V(D)- \{x\}}N^+(v)$.
Since $N^-(v)=\emptyset$, $v \not\in \bigcup_{v\in V(D)- \{x\}}N^+(v)$ and so $\bigcup_{v\in V(D)- \{x\}}N^+(v)\subseteq (V(D) - \{u,v\}) $.

$
$
Therefore \[|\bigcup_{v\in V(D)- \{x\}}N^+(v))| \leq |V(D)- \{u,v\}|  < |V(D)- \{x\}|. \]
Since $V(D)- \{x\}$ is a proper subset of $V(D)$, we reach a contradiction to  Theorem~\ref{thm:triangle-free-chars}(1). Therefore
\begin{equation} \label{eq:noindegreestar} d^-(u)=2. \end{equation}

  By Lemma~\ref{lem:triangle-free-chars}(1), $d^+(u) \geq 1 $.
Suppose, to the contrary, that $d^+(u) \geq 2 $.
Then, by Theorem~\ref{thm:triangle-free-chars}(3), $d^+_{m-1}(u) \geq 2$.
On the other hand, by~\eqref{eq:noindegreestar}, $u$ is a common prey of two vertices $y$ and $y'$. Then there exist arcs $(y,u)$ and $(y',u)$ in $D$.
Take a vertex $z$ in $N^+_{m-1}(u)$.
Then there exists a directed $(u,z)$-walk $W$ of length $m-1$.
Therefore $y \rightarrow W $ is a directed $(y,z)$-walk and $y' \rightarrow W $ is a directed $(y',z)$-walk both of which have length $m$.
Thus $z \in N^+_m(y) \cap N^+_m(y')$ and so $N^+_{m-1}(u)\subseteq N^+_m(y) \cap N^+_m(y')$.
Then, since $d^+_{m-1}(u)\geq 2$, $|N^+_m(y) \cap N^+_m(y')| \geq 2$.
Therefore the anomaly $v$ must be contained in $N^+_m(y) \cap N^+_m(y')$ by
Corollary~\ref{cor:anomaly-finder}(1), which contracts the fact that $N^-(v)=\emptyset$.
Therefore \begin{equation} \label{eq:noindegreestar2}d^+(u)=1. \end{equation}
Since $u$ was arbitrarily chosen,
\eqref{eq:noindegreestar} and \eqref{eq:noindegreestar2} hold for any vertex in $V(D) - \{v\}$.
Now take two distinct vertices $x$ and $y$ in $V(D) - \{v\}$ (they exist since $D$ has at least three vertices by the definition of $m$-step tree inducing digraph). Then $d^+(x)=d^+(y)=1$.
Therefore, by Proposition~\ref{prop:proposition2-5}, $N^+(x) \cap N^+(y) = \emptyset$.
Thus, by Lemma~\ref{lem:exactlyonecycle}, $D-v$ is a vertex-disjoint union of directed cycles
and so $D$ satisfies the condition $(\text{W}2)$.
Hence \eqref{eq:noindegreestar} and \eqref{eq:noindegreestar2} deduce that
$D$ satisfies the condition $(\text{W}3)$.
\end{proof}

\begin{figure}
\begin{center}
    \begin{tikzpicture}[auto,thick]
    \tikzstyle{player}=[minimum size=5pt,inner sep=0pt,outer sep=0pt,fill,color=black, circle]
    \tikzstyle{source}=[minimum size=5pt,inner sep=0pt,outer sep=0pt,ball color=black, circle]
    \tikzstyle{arc}=[minimum size=5pt,inner sep=1pt,outer sep=1pt, font=\footnotesize]
    \path (0:0cm)   node [player]  (u)  [label=below:$u$] {};
    \path (45:1.5cm)     node [player]  (w)  [label=above:$y$] {};
    \path (0:1.5cm)     node [player]  (v)  [label=below:$z$] {};
   \draw[black,thick,-stealth] (w) to [in=45, out=225, distance=0.5cm](u);
      \draw[black,thick,-stealth] (u) to [in=180, out=90, distance=0.5cm](w);
   \draw[black,thick,-stealth] (v) to[out=180, in=0,  distance=0.5cm](u);
      \draw[black,thick,-stealth] (u) to[out=-45, in=225,  distance=0.5cm](v);
    \end{tikzpicture}
\quad \quad \quad \quad
    \begin{tikzpicture}[auto,thick]
    \tikzstyle{player}=[minimum size=5pt,inner sep=0pt,outer sep=0pt,fill,color=black, circle]
    \tikzstyle{source}=[minimum size=5pt,inner sep=0pt,outer sep=0pt,ball color=black, circle]
    \tikzstyle{arc}=[minimum size=5pt,inner sep=1pt,outer sep=1pt, font=\footnotesize]
    \path (0:0cm)   node [player]  (u)  [label=below:$u$] {};
    \path (45:1.5cm)     node [player]  (w)  [label=above:$y$] {};
    \path (0:1.5cm)     node [player]  (v)  [label=below:$z$] {};
  \draw[black,thick,-] (v) - +(w);

    \end{tikzpicture}
\caption{A digraph and its $2$-step competition graph}
\label{fig:not-digraph}
\end{center}
\end{figure}

\begin{Lem} \label{lem:star=nonsource}
Let $D$ be a digraph without sources whose $m$-step competition graph $C^m(D)$ is a star graph.
Then the following are true:
\begin{enumerate}[{(1)}]
\item There exist at most two vertices of $i$-step outdegree at least $2$ for each $1\leq i\leq m$.

\item If a vertex $v$ has a predator distinct from $v$, then $d^+(v) \leq 2$.

\item Each vertex of indegree $2$ is a prey of the center of $C^m(D)$.
\end{enumerate}
\end{Lem}
\begin{proof}
Suppose that there are three vertices $x,y,$ and $z$ having $j$-step outdegree at least $2$ for some $j \in \{1,\ldots,m\}$.
Since $C^m(D)$ is a star graph, at least two of $x,y,$ and $z$ have degree $1$ in $C^m(D)$.
Without loss of generality, we may assume that $y$ and $z$ have degree $1$ in $C^m(D)$.
Then the anomaly of $D$ is an $m$-step common prey of $y$ and $z$ by the ``especially'' part of Proposition~\ref{prop:preydegree}.
Therefore $yz$  is an edge in $C^m(D)$.
Then, since $y$ and $z$ have degree $1$ in $C^m(D)$,
$yz$ is a component in $C^m(D)$, a contradiction.
Thus part (1) is true.

To show part (2),
suppose that there exists a vertex $v$ that has a predator $v'$ distinct from $v$.
If $d^+(v) \geq 3$,
then $d^+_2(v') \geq 3$ and so, by Proposition~\ref{prop:preydegree}, $v$ and $v'$ have degree at least $2$ in $C^m(D)$, a contradiction.
Therefore $d^+(v) \leq 2$.

Suppose that there exists a vertex of indegree $2$ that is a common prey of two vertices $x$ and $y$.
Then $x$ and $y$ are adjacent in $C^m(D)$ by Theorem~\ref{thm:triangle-free-chars}(2).
Therefore $x$ or $y$ is the center of $C^m(D)$.
Thus part (3) is true.
\end{proof}

We call a directed cycle $C$ in a digraph $D$ an {\it induced directed cycle} if $C$ is an induced subdigraph of $D$.

\begin{Thm} \label{thm:stargraph-loopless}
Let $D$ be a loopless digraph whose $m$-step competition graph is a star graph.
If $D$ has no sources, then $m=2$ and $D$ is isomorphic to the digraph given in Figure~\ref{fig:loopless-stargraph}.
\end{Thm}

\begin{proof}
Suppose that $D$ has no sources.
Then, by Theorem~\ref{thm:existence-of-cycle}, $D$ has a directed cycle.
We first claim that each directed cycle in $D$ has length at least $m$.
To reach a contradiction, suppose that there exists a directed cycle $C:=v_0 \to v_1 \to \cdots \to v_{l-1} \to v_0$ of length $l \leq m-1$.
Since $D$ is loopless, $l \geq 2$ and $m \geq 3$.

Suppose that $C$ is not an induced directed cycle.
Then, since $D$ is loopless, $l \geq 3$.
Moreover, there is an arc $(v_i,v_j)$ for some $i,j \in \{0,1,\ldots, l-1\}$ so that it together with
a section of $C$ forms a directed cycle of length at most $l-1$.
Without loss of generality, we may assume that $i=0$.
Then $j \notin \{0,1\}$ and $v_j$ is a common prey of $v_0$ and $v_{j-1}$.
Accordingly, $v_j$ is a $2$-step common prey of
$v_{l-1}$ and $v_{j-2}$.
Therefore $v_0v_{j-1}$ and $v_{l-1}v_{j-2}$ are edges in $C^m(D)$ by Theorem~\ref{thm:triangle-free-chars}(2).
Thus $C^m(D)$ is not a star graph, a contradiction.
Hence $C$ is an induced directed cycle.

Suppose, to the contrary, that no vertex in $V(D) - V(C)$ has a prey in $V(C)$. Then, since $C$ is an induced directed cycle, each vertex on $C$ has exactly one $m$-step predator in $D$.
Therefore each vertex on $C$ is the anomaly by Corollary~\ref{cor:anomaly-finder}(2).
Since $l\geq 2$,
we reach a contradiction to the uniqueness of the anomaly.
Therefore there exists a vertex $a$ in $V(D) - V(C)$ that has a prey on $C$.
Without loss of generality, we may assume that $v_0$ is a prey of $a$.
Therefore $v_0$ is a common prey of $a$ and $v_{l-1}$ and so, by Theorem~\ref{thm:triangle-free-chars}(2), $av_{l-1}$ is an edge in $C^m(D)$.
Thus $a$ or $v_{l-1}$ is the center of $C^m(D)$.
On the other hand, since $a$ is not source, $a$ has a predator $b$.
To show $b\neq v_{l-2}$,
suppose $b=v_{l-2}$.
Then $\{a,v_{l-1} \} \subseteq N^+(v_{l-2})$.
Therefore $d^+_2(v_{l-3})\geq 2$ and $d^+_3(v_{l-4}) \geq 2$ (we assume that each subscript of the vertices on $C$ is reduced to modulo $l$).
Thus each of $v_{l-2}, v_{l-3},$ and $v_{l-4}$ has an $m$-step outdegree at least $2$ by Theorem~\ref{thm:triangle-free-chars}(3).
Hence $v_{l-2}=v_{l-4}$ by Lemma~\ref{lem:star=nonsource}(1) and so $l=2$.
Then we can check that $d^+(v_0)\geq 2$, $d^+_2(v_1) \geq 2$, and $d^+_2(a) \geq 2$.
Therefore
each of $v_0$, $v_1$, and $a$ has an $m$-step outdegree at least $2$ by Theorem~\ref{thm:triangle-free-chars}(3),
which contradicts Lemma~\ref{lem:star=nonsource}(1).
Thus
\[ b \neq v_{l-2}.\]
If $b$ is distinct from $v_{l-1}$, then $v_{l-2}$ and $b$ are adjacent since $v_0$ is a $2$-step common prey of $v_{l-2}$ and $b$, a contradiction to the fact that $a$ or $v_{l-1}$ is the center of $C^m(D)$.
Therefore $b=v_{l-1}.$
Thus $v_0$ is a $3$-step common prey of $v_{l-2}$ and $v_{l-3}$ and so, by Theorem~\ref{thm:triangle-free-chars}(2),  $v_{l-2}v_{l-3}$ is an edge in $C^m(D)$.
Hence $v_{l-2}$ or $v_{l-3}$ is the center of $C^m(D)$.
Then, since $v_{l-2} \neq v_{l-1}$, and $a$ or $v_{l-1}$ is the center of $C^m(D)$,
$v_{l-1}(=v_{l-3})$ is the center of $C^m(D)$.
Therefore $l=2$.
Thus $v_0 \to v_1 \to a \to v_0$ and $a \to v_0 \to v_1 \to v_0$ and so, by Theorem~\ref{thm:triangle-free-chars}(2), $v_0a$ is an edge in $C^m(D)$, which contradicts the fact that $v_1$ is the center of $C^m(D)$.

Hence we have shown that
\begin{itemize}
\item[($\ast$)] each directed cycle in $D$ has length at least $m$.
\end{itemize}
Since $D$ is loopless and has no sources,
\begin{equation}\label{eq:prop:stargraph-loopless-1}
d^+(v)  \leq 2
\end{equation}
for each vertex $v$ in $D$ by Lemma~\ref{lem:star=nonsource}(2).
If each vertex has outdegree $1$, then each vertex has indegree $1$ since $D$ has no sources, and so $C^m(D)$ is edgeless.
Therefore there exists a vertex $u$ of outdegree at least $2$. Thus
\begin{equation}\label{eq:prop:stargraph-loopless-2}
d^+(u)=2
\end{equation}
by~\eqref{eq:prop:stargraph-loopless-1}.
Since $D$ has no sources,
there exists a directed walk
\begin{equation}\label{eq:prop:stargraph-loopless-2-1}
W:=x \to y \to u \end{equation}
in $D$.
Suppose, to the contrary, that $m \geq 3$. Then, by ($\ast$), $x$, $y$, and $u$ are distinct.
Since $d^+(u)=2$,
$d^+_3(x)  \geq 2$, and $d^+_2(y) \geq 2$.
Therefore each of $x, y, u$ has $m$-step outdegree at least $2$ by Theorem~\ref{thm:triangle-free-chars}(3),
which contradicts Lemma~\ref{lem:star=nonsource}(1).
Thus $m \leq 2$ and so \[m=2.\]
Let $c$ be the center of $C^2(D)$.
Since  $d^+_2(v)\leq 4$ for each vertex $v$ in $D$ by~\eqref{eq:prop:stargraph-loopless-1},
$c$ has degree at most $4$ in $C^2(D)$ by Lemma~\ref{lem:triangle-free-chars}(2) and so $|V(D)| \leq 5$.
Since $|V(D)| > m=2$,  $|V(D)| \in \{3,4,5\}$.

Suppose $|V(D)|=3$.
Then, since $u \neq y$,
$V(D)=\{u,y,z\}$.
By Theorem~\ref{thm:existence-of-cycle},
there exists a directed cycle in $D$.
We take a longest directed cycle $C$ of length $l$.
Then, since $D$ is loopless and $|V(D)|=3$, $l=2$ or $3$.
If $l=2$, then, by Theorem~\ref{thm:existence-of-cycle},
$D$ is isomorphic to the digraph given in Figure~\ref{fig:not-digraph} and so $C^2(D)$ has an isolated vertex, a contradiction.
Thus $l=3$.
Then $C=u \to z \to y \to u$ or $u \to y \to z  \to u$.
We note that $N^+(u)=\{y,z\}$ and, by \eqref{eq:prop:stargraph-loopless-2-1}, $y \to u$.
To show $C=u \to z \to y \to u$, suppose $C=u \to y \to z  \to u$. 
Then $u$ is a common prey of $y$ and $z$ and $z$ is a common prey of $u$ and $y$, so, by Theorem~\ref{thm:triangle-free-chars}(2), $yz,uy$ are edges in $C^2(D)$.
Moreover, $z$ is a $2$-step common prey of $u$ and $z$, and so $uz$ is an edge in $C^2(D)$.
Thus $C^2(D)$ is a triangle,  which contradicts the fact $C^2(D)$ is a star. 
Therefore $C=u \to z \to y \to u$.
Since $u$ has outdegree $2$ by~\eqref{eq:prop:stargraph-loopless-2},
$u\to y$.
Therefore we obtain a subdigraph isomorphic to the one given in Figure~\ref{fig:loopless-stargraph}.
Thus $y$ is a common prey of $u$ and $z$ and $y$ is a $2$-step common prey of $u$ and $y$.
Hence $uz$ and $uy$ are edges in $C^2(D)$ by Theorem~\ref{thm:triangle-free-chars}(2).
Now it is easy to check that adding more arcs to the digraph given in Figure~\ref{fig:loopless-stargraph} results in the edge joining $y$ and $z$ in $C^2(D)$.
Therefore we conclude that $D$ is isomorphic to the one given in Figure~\ref{fig:loopless-stargraph}.

Now suppose that $|V(D)|=4$ or $5$.
Then $c$ has at least three $2$-step prey and so
\[d^+(c)=2\]
by~\eqref{eq:prop:stargraph-loopless-1}.
Let $u_1$ and $u_2$ be the prey of $c$.
If each of $u_1$ and $u_2$ has outdegree $1$, then $c$ has at most two $2$-step prey, which is impossible.
Therefore at least one of them has outdegree $2$.
Without loss of generality,
we may assume that
$u_1$ has outdegree $2$.
Then $c$ and $u_1$ are the only vertices of outdegree $2$ by Lemma~\ref{lem:star=nonsource}(1).
Hence $u_2$ has outdegree $1$ by \eqref{eq:prop:stargraph-loopless-1}.
Moreover, $c$ has at most three $2$-step prey and so $c$ has degree at most $3$ in $C^2(D)$.
Therefore $|V(D)| = 4$.
Thus $c$ has degree $3$ in $C^2(D)$.

By Lemma~\ref{lem:triangle-free-chars}(2),
each vertex is a $2$-step common prey of at most two vertices.
Therefore $c$ has exactly three $2$-step prey in $D$.
Let $x$ be one of them.
Then, other than $c$, there is exactly one $2$-step predator of $x$.
We denote it by $\tilde{x}$.
Then, for distinct $2$-step prey $x$ and $y$ of $c$, $\tilde{x} \neq \tilde{y}$.
Suppose that $u_1$ has indegree $1$.
If some prey $d$ of $u_1$ has indegree $1$, then $d$ is a $2$-step prey of $c$ and $N^-_2(d)=\{c\}$, which contradicts the existence of $\tilde{d}$.
Therefore each prey of $u_1$ has indegree $2$.
Thus, by Lemma~\ref{lem:star=nonsource}(3),
each prey of $u_1$ is a prey of $c$.
Since $D$ is loopless and $u_1$ has outdegree $2$, $d^+(c) \geq 3$, a contradiction.
Thus $u_1$ has indegree $2$ and $|N^-(u_1)-\{c\}|=1$.
Then
the vertex in $ N^-(u_1)- \{c\}$
is a $2$-step common predator of the two prey $w$ and $z$ of $u_1$.
Now, even if $w \neq z$, $\tilde{w}=\tilde{z}$, a contradiction.
Hence the statement is true.
\end{proof}

\begin{Lem} \label{lem:star-generating-type-B}
Let $D$ be a windmill digraph or an $m$-conveyor digraph.
Then $C^m(D)$ is a star graph.
\end{Lem}

\begin{proof}
We suppose that $D$ is a windmill digraph with the source $v$.
Then $v$ and another vertex $w$ have a common prey by ($\text{W}2$) and ($\text{W}3$).
Therefore, by ($\text{W}2$), $v$ and $w$ have an $m$-step common prey for any $m \geq 1$ and so $v$ and $w$ are adjacent in $C^m(D)$.
By ($\text{W}1$) and ($\text{W}2$), any two vertices other than $v$ cannot have an $m$-step common prey for any $m \geq 1$.
Thus $C^m(D)$ is a star graph with the center $v$.

Now we suppose that $D$ is an $m$-conveyor digraph with the loop $v$ satisfying (M1) and (M2).
Thus any vertex $w$ other than $v$ has a unique $m$-step prey $x$ on a directed cycle containing $w$ and $w$ is the only $m$-step predator of $x$ in $V(D)- \{v\}$, and so $w$ is not adjacent to any vertex belonging to $V(D)- \{v\}$ in $C^m(D)$.
Since $D$ is a weakly connected digraph, by (M2), each directed cycle in $D-v$ has a vertex that is a prey of $v$ and so there exists an internally secure lane $W$ in $D$ containing $x$.
By (M3), $W$ has length at most $m$.
Since $v$ is incident to a loop by (M1), we may obtain a directed walk of length $m$ from $v$ to $x$ by using the loop incident to $v$.
Hence $x$ is an $m$-step common prey of $v$ and $w$ and so $v$ and $w$ are adjacent in $C^m(D)$, which implies that $C^m(D)$ is a star graph with the center $v$.
\end{proof}

\begin{Lem} \label{lem:star-not-duck}
Let $D$ be a tree-inducing digraph whose $m$-step competition graph is a star graph.
Then $D$ is duck-free.
\end{Lem}
\begin{proof}
Suppose, to the contrary, that $D$ contains a subdigraph $H$ isomorphic to a duck digraph (see Figure~\ref{fig:duckgraph} for an illustration).
 Then, by Corollary~\ref{cor:duckfreem>3}, $m=2$.
Let $v_1$ and $v_2$ be the neck vertex and the tail vertex, respectively, of $H$.
By the definition of a duck digraph, $\{(v_1,v_1),$ $(v_1,v_2),(v_2,v_1)\} \subseteq A(D)$.
It is easy to check that $\{v_1,v_2\} \subseteq N^-_2(v_1)$ and $\{v_1,v_2\} \subseteq N^-_2(v_2)$.
By Lemma~\ref{lem:triangle-free-chars}(2),
\[N^-_2(v_1)=N^-_2(v_2)=\{v_1,v_2\}.\]
 Since a predator of $v_1$ or $v_2$ would belong to $N^-_2(v_1)$,
 \begin{equation} \label{eq:prop:double-chainnotstar-0} N^-(v_1)=\{v_1,v_2\} \quad \text{and}
 \quad \{v_1\}\subseteq N^-(v_2)\subseteq \{v_1,v_2\}. \end{equation}
To show $N^+(v_1)=\{v_1,v_2\}$ by contradiction,
suppose that there exists a vertex $v_3$ distinct from $v_1$ and $v_2$ in $N^+(v_1)$. Then $\{v_1,v_2,v_3\} \subseteq N^+_2(v_1) \cap N^+_2(v_2)$.
Therefore one of $v_1,v_2,v_3$ is the anomaly of $D$ by Corollary~\ref{cor:anomaly-finder}(1).
Thus $|N^+_2(v_1) \cap N^+_2(v_2)|=2$ by Corollary~\ref{cor:no-more-two-common}(i), which contradicts the fact that $\{v_1,v_2,v_3\} \subseteq N^+_2(v_1) \cap N^+_2(v_2)$.
Hence \begin{equation} \label{eq:prop:double-chainnotstar-1}N^+(v_1)=\{v_1,v_2\}. \end{equation}
If $N^+(v_2) \subseteq \{v_1,v_2\}$,
then $H$ is a component of $C^m(D)$  by~\eqref{eq:prop:double-chainnotstar-0}  and~\eqref{eq:prop:double-chainnotstar-1}, which contradicts the hypothesis that $C^2(D)$ is a star graph with at least three vertices (a tree-inducing digraph has at least three vertices by definition).
Therefore
there exists a vertex $v_3$ in $N^+(v_2) -\{v_1,v_2\}$.
If $v_3$ is incident to a loop,
then $v_1$, $v_2$, and $v_3$ are $2$-step predators of $v_3$, which contradicts Lemma~\ref{lem:triangle-free-chars}(2).
Therefore $v_3$ is not incident to a loop.
Moreover, $v_3$ has outdegree at least $1$ by Lemma~\ref{lem:triangle-free-chars}(1).
Then, by~\eqref{eq:prop:double-chainnotstar-0},
neither $v_1$ nor $v_2$ can be a prey of $v_3$, so
there must be a vertex $v_4$ in $N^+(v_3) -\{v_1,v_2,v_3\}$.
Therefore $\{v_1,v_2,v_4\} \subseteq N^+_2(v_2)$ and $\{v_1,v_2,v_3\} \subseteq N^+_2(v_1)$.
Thus each degree of $v_1$ and $v_2$ is at least $2$ in $C^2(D)$ by Proposition~\ref{prop:preydegree}.
Hence $C^2(D)$ is not a star graph, a contradiction.
\end{proof}

Now we are ready to prove Theorem~\ref{thm:complete_star}.

\begin{proof}[Proof of Theorem~\ref{thm:complete_star}]
To show the ``only if" part, suppose that there exists a digraph $D$ with $n$ vertices whose $m$-step competition graph is a star graph for some $2\leq  m < n$.
Then $D$ is duck-free by Lemma~\ref{lem:star-not-duck}.
If $D$ has a source, then $D$ is a windmill digraph by Theorem~\ref{thm:star-source}.
Suppose that $D$ has no sources.
If $D$ is loopless, then (iii) is true by Theorem~\ref{thm:stargraph-loopless}.
Now we suppose that $D$ has a loop.
We will show that $D$ is an $m$-conveyor digraph.
Since $D$ has a loop and $D$ is duck-free,
there exists a vertex $v$ such that
$N^-(v)=\{v\}$
and $d^+(v)\geq 2$ by Theorem~\ref{thm:existenceofoutloop}.
Since $N^-(v)=\{v\}$,
(M1) is satisfied and $v$ is the only $m$-step predator of $v$.
Then, by Corollary~\ref{cor:anomaly-finder}(2), $v$ is the anomaly.

To reach a contradiction, suppose that there exists a vertex $w$ distinct from $v$ having outdegree at least $2$.
Then $d^+_i(w) \geq 2$ for each $1\leq i\leq m$ by Theorem~\ref{thm:triangle-free-chars}(3).
If $w$ has degree $1$ in $C^m(D)$, then
$v \in N^+_m(w)$ by the ``especially" part of Proposition~\ref{prop:preydegree}, which contradicts the fact that $v$ is the only $m$-step predator of $v$.
Therefore
$w$ has degree at least $2$ in $C^m(D)$ and so $w$ is the center of $C^m(D)$.
Since $D$ has no sources,
$w$ has a predator $x$.
Since $N^+_{m-1}(w)\subseteq N^+_m(x)$,
$x$ has at least two $m$-step prey each of which is not $v$.
Then, since $v$ is the anomaly, $x$ has degree at least $2$ by Proposition~\ref{prop:preydegree}, and so $x$ is the center of $C^m(D)$.
Thus $x=w$.
Since $x$ was arbitrarily taken, $N^-(w)=\{w\}$.
Consequently, $w$ is the anomaly by Corollary~\ref{cor:anomaly-finder}(2).
Then, since $v\neq w$, we reach a contradiction to the uniqueness of the anomaly.
Therefore $v$ is the only vertex of outdegree at least $2$ in $D$ and so, by Lemma~\ref{lem:triangle-free-chars}(1), each vertex in $V(D) - \{v\}$ has outdegree $1$.
Thus any pair of vertices in $V(D) - \{v\}$ has no common prey by Proposition~\ref{prop:proposition2-5}.
Hence $D-v$ is a vertex-disjoint union of directed cycles by Lemma~\ref{lem:exactlyonecycle} and so (M2) is satisfied.

Suppose that there exists an internally secure lane $W$ of length at least $m+1$.
Then the $m$th interior vertex $v'$ on $W$ has exactly one $m$-step predator in $D$. Thus $v'$ is the anomaly in $D$ by Corollary~\ref{cor:anomaly-finder}(2).
However, since $N^-(v)=\{v\}$, we obtain $v'\neq v$ and so we reach a contradiction to the uniqueness of the anomaly.
Therefore each internally secure lane of $D$ has length at most $m$ and so (M3) is satisfied. 
Thus
$D$ is an $m$-conveyor digraph.
Hence the ``only if" part is true.

Now we show the `if" part.
If $D$ is a windmill digraph or an $m$-conveyor digraph,
then $C^m(D)$ is a star graph by Lemma~\ref{lem:star-generating-type-B}.
In addition, it is easy to check that the $2$-step competition graph of the digraph given in Figure~\ref{fig:loopless-stargraph} is a star graph.
Therefore the ``if" part is true and so this completes the proof.
\end{proof}
\section{Acknowledgement}
This research was supported by the National Research Foundation of Korea(NRF) funded by the Korea government(MSIP) (NRF-2017R1E1A1A03070489, NRF-2022R1A2C1009648, and 2016R1A5A1008055).

\end{document}